\newtheorem{theo}{Theorem}[section]
\newtheorem{lemm}[theo]{Lemma}
\newtheorem{prop}[theo]{Proposition}
\newtheorem{conj}[theo]{Conjecture}
\numberwithin{equation}{section}
\newdimen\Squaresize \Squaresize=11pt
\newdimen\Thickness \Thickness=0.7pt
\def\Square#1{\hbox{\vrule width \Thickness
   \vbox to \Squaresize{\hrule height \Thickness\vss
    \hbox to \Squaresize{\hss#1\hss}
   \vss\hrule height\Thickness}
\unskip\vrule width \Thickness} \kern-\Thickness}
\def\Vsquare#1{\vbox{\Square{$#1$}}\kern-\Thickness}
\def\moins{\raise 1pt\hbox{{$\scriptstyle -$}}}
\begin{document}
\begin{CJK*}{GBK}{song}
\title{Proof of a Conjecture of Z.-W. Sun on  Trigonometric Series\thanks{This work is supported by NSFC (No. 11171283).}}
\author{Brian Y. Sun\thanks{Corresponding author.
{\small Email: brianys1984@126.com.}} ~and J. X. Meng\\
{\small College of Mathematics and System Science,}\\
{\small Xinjiang University, Urumqi 830046, P.R.China}
}
\date{}
\maketitle {\flushleft\bf Abstract.}~Recently, Z. W. Sun introduced a sequence $(S_n)_{n\geq
0}$, where
$S_n=\frac{\binom{6n}{3n}
\binom{3n}{n}}{2(2n+1)\binom{2n}{n}}$,
 and found one congruence and two convergent series
 on $S_n$ by {\tt{Mathematica}}. Furthermore, he proposed
  some related conjectures. In this paper, we first
  give analytic proofs of his two convergent series and then
   confirm one of his conjectures by invoking series expansions of $\sin(t\arcsin(x))$ and
$\cos(t\arcsin(x)).$

{\flushleft{\bf Key words:} Divisibility; Congruences; Trigonometric series;
Convergent series}
{\flushleft{\bf AMS Classification 2010:} Primary 11B65; Secondary 05A10, 11A07.}

\section{Introduction}
 \label{sec:introduction} Throughout the paper, we let $\mathbb{R}$,
$\mathbb{N}=\{0,1,2,3,\ldots\}$ and $\mathbb{N}^+=\mathbb{N}\backslash \{0\}$ denote the set of
real numbers, natural numbers and positive integer numbers,
respectively.

Recently, Z. W. Sun \cite{sun} considered the divisibilities
 of products and sums concerned binomial
 coefficients and central binomial coefficients. He also studied the divisibility of $\binom{6n}{3n}\binom{3n}{n}$ and obtained the following result,
\begin{equation}\label{sun-divisibility}
2(2n+1)\binom{2n}{n}\Big |\binom{6n}{3n}\binom{3n}{n}\,\,\text{for all}\,\, n\in \mathbb{N}^+,
\end{equation}
where $\binom{m}{n}=\frac{m!}{n!(m-n)!}$ is a binomial coefficient. We call
$\binom{2n}{n}$ the central binomial coefficient (cf. \cite[A000984]{Sloane}). The binomial coefficients ${\binom{6n}{3n}}$ is the sequence \cite[A066802]{Sloane} and ${\binom{3n}{n}}$ is the sequence \cite[A005809]{Sloane}.

According to \eqref{sun-divisibility}, Z. W. Sun \cite{sun}
introduced the following integer sequence \cite[A176898]{Sloane}.
\begin{equation*}
S_n=\frac{\binom{6n}{3n}\binom{3n}{n}}
{2(2n+1){\binom{2n}{n}}},
\,\, \text{for} \,\,n\in \mathbb{N}^+.
\end{equation*}
Here we list the values of $S_1,S_2,\ldots,S_8$ as follows:
\begin{center}
5, 231, 14568, 1062347, 84021990,\\
7012604550, 607892634420, 54200780036595.
\end{center}

Z. W. Sun \cite{sun} proved that  for any odd prime $p$,
$$S_p\equiv15-30p+60p^2\pmod {p^3}.$$
Additionally, Guo  \cite{guo} studied the sequence and proved that
$3S_n\equiv 0~\pmod {2n+3}$, positively answering a question of Z.
W. Sun \cite{sun}. By setting $S_0=\frac{1}{2}$ and employing
{\tt{Mathematica}}, Z. W. Sun also obtained
\begin{equation}\label{mathematica-find-1}
\sum_{n\geq 0}S_nx^n=\frac{\sin(\frac{2}{3}\arcsin(6\sqrt{3x}))}{8\sqrt{3x}}\left(0<x\leq\frac{1}{108}\right),
\end{equation}
and
\begin{equation}\label{mathematica-find-2}
\sum_{n\geq 0}\frac{S_n}{(2n+3)108^n}=\frac{27\sqrt{3}}{256}.
\end{equation}
Particularly, setting  $x=1/108$ in \eqref{mathematica-find-1}, we
derive
\begin{equation}\label{convergent-bound}
\sum_{n=0}^\infty\frac{S_n}{108^n}=\frac{3\sqrt{3}}{8}.
\end{equation}
Moreover, he proposed the following conjecture.
\begin{conj}\label{conjecture-1}
There exist positive integers $T_1,T_2,\ldots$ such that
\begin{equation}\label{eq-conjecture-1}
\sum_{k=0}^\infty S_kx^{2k+1}+\frac{1}{24}-\sum_{k=1}^{\infty}T_kx^{2k}
=\frac{\cos(\frac{2}{3}\arccos(6\sqrt{3}x))}{12}
\end{equation}for all real $x$ with $|x|\leq \frac{1}{6\sqrt{3}}$.
Also, $T_p\equiv-2\pmod p$ for any prime $p$.
\end{conj}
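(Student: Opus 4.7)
The plan is to reduce \eqref{eq-conjecture-1} to the already-proved identity \eqref{mathematica-find-1} by converting $\arccos$ to $\arcsin$, and then to extract $T_k$ from the classical Maclaurin expansion of $\cos(t\arcsin x)$ at $t=2/3$. First I would use $\arccos y = \pi/2-\arcsin y$ together with $\cos(\pi/3-\alpha)=\tfrac12\cos\alpha+\tfrac{\sqrt{3}}{2}\sin\alpha$ to write
$$\frac{\cos\bigl(\tfrac23\arccos(6\sqrt{3}\,x)\bigr)}{12}=\frac{1}{24}\cos\bigl(\tfrac23\arcsin(6\sqrt{3}\,x)\bigr)+\frac{\sqrt{3}}{24}\sin\bigl(\tfrac23\arcsin(6\sqrt{3}\,x)\bigr).$$
Replacing $x$ by $x^2$ in \eqref{mathematica-find-1} and multiplying by $x$ yields
$$\sum_{k\ge 0}S_kx^{2k+1}=\frac{\sin\bigl(\tfrac23\arcsin(6\sqrt{3}\,x)\bigr)}{8\sqrt{3}},$$
valid on the whole interval $|x|\le 1/(6\sqrt{3})$ by oddness of both sides. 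Hence the $\sin$-term above is exactly $\sum_{k\ge0}S_kx^{2k+1}$, and \eqref{eq-conjecture-1} collapses to the purely even identity
$$\sum_{k\ge 1}T_kx^{2k}=\frac{1}{24}\Bigl(1-\cos\bigl(\tfrac23\arcsin(6\sqrt{3}\,x)\bigr)\Bigr).$$

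Next I would feed in the standard expansion
$$\cos(t\arcsin x)=1-\sum_{k\ge 1}\frac{t^2\prod_{j=1}^{k-1}\bigl((2j)^2-t^2\bigr)}{(2k)!}\,x^{2k},$$
set $t=2/3$, and substitute $x\mapsto 6\sqrt{3}\,x$. Using $(2j)^2-(2/3)^2=\tfrac49(9j^2-1)$ together with the collapse $\prod_{j=1}^{k-1}(9j^2-1)=(3k)!/(3^kk!(3k-1))$, the algebra telescopes to the closed form
$$T_k=\frac{16^k\binom{3k}{k}}{24(3k-1)}=\frac{16^{k-1}\binom{3k-2}{k}}{2k-1}.$$

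It remains to show each $T_k$ is a positive integer and that $T_p\equiv -2\pmod p$. The integrality follows from the identity $k\binom{3k-2}{k}=(2k-1)\binom{3k-2}{k-1}$ (both sides equal $(3k-2)!/((k-1)!(2k-2)!)$), which forces $(2k-1)\mid\binom{3k-2}{k}$ since $\gcd(k,2k-1)=1$. For the congruence, for odd $p\ge 3$ Fermat gives $16^{p-1}\equiv 1\pmod p$; Lucas' theorem applied to the base-$p$ expansion $3p-2=2\cdot p+(p-2)$ gives $\binom{3p-2}{p}\equiv\binom{2}{1}\binom{p-2}{0}=2\pmod p$; and $2p-1\equiv -1\pmod p$, so $T_p\equiv 2/(-1)\equiv -2\pmod p$. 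The case $p=2$ reduces to checking $T_2=32\equiv 0\equiv -2\pmod 2$ directly.

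I expect the main obstacle to be the coefficient bookkeeping in the second step: pushing $(2/3)^2\prod_{j=1}^{k-1}\bigl((2j)^2-(2/3)^2\bigr)\cdot(6\sqrt{3})^{2k}/(24(2k)!)$ down to $16^{k-1}\binom{3k-2}{k}/(2k-1)$ requires careful tracking of the powers of $2$, $3$, and $9$, and once the closed form is in hand the integrality trick via $\gcd(k,2k-1)=1$ is the cleanest way to certify $T_k\in\mathbb Z^+$. Analytic issues (absolute convergence at $|x|=1/(6\sqrt{3})$ and validity of the termwise identity at the boundary) follow from the polynomial-times-binomial growth of $S_k$ and $T_k$ and pose no genuine difficulty.
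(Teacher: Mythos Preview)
Your proposal is correct and follows essentially the same route as the paper: convert $\arccos$ to $\arcsin$ via $\arccos y=\pi/2-\arcsin y$ and the cosine subtraction formula (the paper packages this as its Proposition~2.3), match the $\sin$-part with \eqref{mathematica-find-1}, and then read off $T_k=\dfrac{16^k\binom{3k}{k}}{24(3k-1)}$ from the Maclaurin expansion of $\cos(t\arcsin x)$ at $t=2/3$.

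The only differences are in the two closing steps. For integrality, the paper instead rewrites $T_n=16^{n-1}\bigl(2\binom{3n-2}{n-1}-\binom{3n-2}{n}\bigr)$ and cites an earlier divisibility paper, whereas you use the equivalent identity $k\binom{3k-2}{k}=(2k-1)\binom{3k-2}{k-1}$ together with $\gcd(k,2k-1)=1$; your argument is self-contained and arguably cleaner. For the congruence $T_p\equiv-2\pmod p$, the paper proves a small ad hoc lemma $\frac{1}{p}\binom{3p-2}{p-1}\equiv-2\pmod p$ by expanding the product, while you invoke Lucas on $\binom{3p-2}{p}$; both are one-line computations and yield the same conclusion.
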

In this paper, we first deduce formulas \eqref{mathematica-find-1}
 and \eqref{mathematica-find-2} by utilizing a series
  expansion in \cite{stanley}. Besides, we confirm
     Conjecture \ref{conjecture-1} , i.e.,
\begin{theo}\label{theorem}
Conjecture \ref{conjecture-1} is true.
\end{theo}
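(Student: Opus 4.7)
The plan is to reduce \eqref{eq-conjecture-1} to an identity about the Taylor expansion of $\cos(\tfrac{2}{3}\arcsin(6\sqrt{3}x))$, extract a closed form for $T_k$, and then verify positivity and the congruence directly from that formula.

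First I would rewrite the right-hand side via $\arccos y = \pi/2 - \arcsin y$ and the cosine addition formula:
\begin{equation*}
\cos\bigl(\tfrac{2}{3}\arccos(6\sqrt{3}x)\bigr) = \tfrac{1}{2}\cos\bigl(\tfrac{2}{3}\arcsin(6\sqrt{3}x)\bigr) + \tfrac{\sqrt{3}}{2}\sin\bigl(\tfrac{2}{3}\arcsin(6\sqrt{3}x)\bigr).
\end{equation*}
Substituting $x \mapsto y^2$ in the already-proved identity \eqref{mathematica-find-1} gives $\sin(\tfrac{2}{3}\arcsin(6\sqrt{3}y)) = 8\sqrt{3}\sum_{k\geq 0}S_k y^{2k+1}$, so the $\sin$-contribution on the right of \eqref{eq-conjecture-1} exactly absorbs the series $\sum_k S_k x^{2k+1}$ on the left. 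What remains is the clean identity
\begin{equation*}
\sum_{k\geq 1} T_k x^{2k} = \tfrac{1}{24}\bigl(1 - \cos(\tfrac{2}{3}\arcsin(6\sqrt{3}x))\bigr),
\end{equation*}
which determines the $T_k$ uniquely as Taylor coefficients.

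Next I would invoke the classical series
\begin{equation*}
\cos(t\arcsin x) = \sum_{k\geq 0}\frac{x^{2k}}{(2k)!}\prod_{i=0}^{k-1}\bigl((2i)^2-t^2\bigr),
\end{equation*}
which follows directly from the Chebyshev-type ODE $(1-x^2)f''-xf'+t^2 f = 0$ with $f(0)=1$, $f'(0)=0$. Setting $t=2/3$, using $(2i)^2-4/9 = \tfrac{4}{9}(3i-1)(3i+1)$ together with the identification $\prod_{i=1}^{k-1}(3i-1)(3i+1) = (3k-2)!/[3^{k-1}(k-1)!]$ (the product of integers in $[1,3k-2]$ coprime to $3$), and substituting $x\mapsto 6\sqrt{3}x$, routine simplification collapses the coefficient of $x^{2k}$ to
\begin{equation*}
T_k = \frac{2\cdot 16^{k-1}}{3k-1}\binom{3k-1}{k-1}.
\end{equation*}

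Both claims of the conjecture then follow from this closed form. The factor $\tfrac{2}{3k-1}\binom{3k-1}{k-1}$ is a Raney (generalized Fuss--Catalan) number, a positive integer by the Cycle Lemma (cf.\ \cite{stanley}), whence $T_k\in\mathbb{N}^+$. For the congruence at an odd prime $p$, Fermat's little theorem gives $16^{p-1}\equiv 1\pmod p$ and $3p-1\equiv -1\pmod p$, while Lucas' theorem applied to the base-$p$ expansions $3p-1 = 2\cdot p + (p-1)$ and $p-1 = 0\cdot p + (p-1)$ yields $\binom{3p-1}{p-1}\equiv \binom{2}{0}\binom{p-1}{p-1}=1\pmod p$, so $T_p\equiv -2\pmod p$; the case $p=2$ is settled by $T_2=32$. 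I expect the conceptual crux to be the very first move: spotting the identity $\arccos = \pi/2 - \arcsin$ and seeing that the induced $\sin$-piece absorbs $\sum_k S_k x^{2k+1}$, thereby converting the whole conjecture into a purely combinatorial identification of the Taylor coefficients of $1-\cos(\tfrac{2}{3}\arcsin(6\sqrt{3}x))$. Once that reduction is in hand, the remainder is routine.
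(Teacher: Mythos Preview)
Your proof is correct and follows essentially the same route as the paper: both reduce via $\arccos=\tfrac{\pi}{2}-\arcsin$ and Proposition~\ref{prop-1} to identifying the Taylor coefficients of $\cos(\tfrac{2}{3}\arcsin(6\sqrt{3}x))$ using the series \eqref{stanley-series-2}, arriving at the same closed form (the paper writes it as $T_n=\tfrac{16^n}{24(3n-1)}\binom{3n}{n}$, which equals your $\tfrac{2\cdot16^{n-1}}{3n-1}\binom{3n-1}{n-1}$). The only differences are cosmetic: for integrality the paper invokes the identity $\tfrac{1}{3n-1}\binom{3n}{n}=2\binom{3n-2}{n-1}-\binom{3n-2}{n}$ instead of your Raney/Cycle Lemma argument, and for the congruence it proves directly that $\tfrac{1}{p}\binom{3p-2}{p-1}\equiv-2\pmod p$ rather than appealing to Lucas' theorem.
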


\section{Trigonometric Series}
\label{sec:Trigonometric Series}
Before proving Theorem \ref{theorem}, we need some formulas and series expansions on
trigonometric functions. We first note that
\begin{equation}\label{arcsin+arccos}
\arcsin(x)+\arccos(x)=\frac{\pi}{2},
\end{equation}
where $x\in [-1,1]$. What's more, the basic fact
\begin{equation}\label{cos(a+b)}
\cos(\alpha-\beta)=\cos\alpha\cos\beta+\sin\alpha\sin\beta
\end{equation}
is needed.

Here we also use two trigonometric series in \cite[Ex. \ 44, p. \ 51]{stanley}, namely,
\begin{equation}\label{stanley-series-1}
\sin(t \arcsin(x))=\sum_{n\geq 0}(-1)^nt\left(\prod_{i=0}^{n-1}(t^2-(2i+1)^2)\right)\frac{x^{2n+1}}{(2n+1)!},
\end{equation}
and
\begin{equation}\label{stanley-series-2}
\cos(t \arcsin(x))=\sum_{n\geq 0}(-1)^n\left(\prod_{i=0}^{n-1}(t^2-(2i)^2)\right)\frac{x^{2n}}{(2n)!}.
\end{equation}
With the formula \eqref{stanley-series-1} in hand,
 it is not difficult to derive
the identities  \eqref{mathematica-find-1} and
\eqref{mathematica-find-2}.

To begin with, we consider the uniform convergence of series
 \eqref{mathematica-find-1}.
Due to Stirling \cite{PD}, we hold the following approximate
formula, which was called Stirling's formula,
\begin{equation}\label{stirling-formula}\Gamma(\alpha)\approx \left(\frac{\alpha-1}{e}\right)^{\alpha-1}\sqrt{2\pi(\alpha-1)}
,\,\,\text{as}\,\,\alpha\rightarrow\infty,
\end{equation}
where $\Gamma(\alpha)$ is the gamma function and is defined by
$$\Gamma(\alpha)=\int_0^{+\infty}x^{\alpha-1}e^{-x}dx\,\, \mbox{for~}
\alpha>0.$$ For more information on this formula, one can consult
\cite{PD}. By Stirling's formula \eqref{stirling-formula}, we have
\begin{equation}\label{convergent-radius}
\rho=\overline{\lim_{n\rightarrow\infty}}\sqrt[n]{S_n}=108,\,\,\text{as}
\,\,n\rightarrow \infty.
\end{equation}
Thus by Cauchy-Hadamard's theorem and \eqref{convergent-radius}, the
series  $\sum_{n=0}^\infty S_nx^n$ is uniformly convergent for
$0<x<\frac{1}{108}$. So, before we make operations on
$\sum_{n=0}^\infty S_nx^n$, we  designate  $x\in(0,1/108)$ in order
that all the following operations are well defined.

\begin{theo}\label{in-int}For all $a,b\in \mathbb{R}$ and $bx\in[-1,1]$, we have
$$\int x\sin(a\arcsin(bx))dx=\frac{\frac{\sin \left((a-2) \arcsin(b x)\right)}{a-2}-\frac{\sin \left((a+2) \arcsin(b x)\right)}{a+2}}{4 b^2}+C,$$
where $C$ is any constant.
\end{theo}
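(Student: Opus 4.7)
The plan is to verify the antiderivative by differentiating the right-hand side and showing that the derivative equals $x\sin(a\arcsin(bx))$. This is the cleanest route: the formula is a candidate antiderivative, and an indefinite integral identity up to the constant $C$ is equivalent to the derivative matching the integrand.

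To carry this out, I would introduce the substitution $u=\arcsin(bx)$, so that $\tfrac{du}{dx}=\tfrac{b}{\sqrt{1-b^2x^2}}=\tfrac{b}{\cos u}$ and $x=\tfrac{\sin u}{b}$. Applying the chain rule to each summand on the right, one finds
\begin{equation*}
\frac{d}{dx}\!\left[\frac{\sin((a\mp 2)u)}{a\mp 2}\right]=\frac{b\cos((a\mp 2)u)}{\cos u},
\end{equation*}
so that the derivative of the whole right-hand side reduces to
\begin{equation*}
\frac{\cos((a-2)u)-\cos((a+2)u)}{4b\cos u}.
\end{equation*}

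At this point, I would invoke the product-to-sum identity $\cos(\alpha-\beta)-\cos(\alpha+\beta)=2\sin\alpha\sin\beta$ with $\alpha=au$ and $\beta=2u$, together with the double-angle formula $\sin(2u)=2\sin u\cos u$. The numerator becomes $4\sin(au)\sin u\cos u$, and the factor $\cos u$ cancels against the denominator, leaving $\tfrac{\sin u}{b}\cdot\sin(au)=x\sin(a\arcsin(bx))$, exactly the integrand. The hypothesis $bx\in[-1,1]$ ensures that $\arcsin(bx)$ and $\cos u\geq 0$ are well defined, so no sign issues arise.

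The main technical point is merely to recognize that the expression after differentiation is a sum-to-product pattern disguised by the shifts $a\pm 2$; once the product-to-sum identity is applied the cancellation of $\cos u$ is automatic. No genuine obstacle beyond bookkeeping is expected, and the cases $a=\pm 2$ (where individual summands are formally indeterminate) can be interpreted by the obvious limit $\lim_{c\to 0}\tfrac{\sin(cu)}{c}=u$, which poses no difficulty since the identity is an equality of smooth functions of $a$.
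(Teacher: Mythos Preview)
Your verification by differentiation is correct: with $u=\arcsin(bx)$ the chain rule gives $\frac{d}{dx}\bigl[\sin((a\mp 2)u)/(a\mp 2)\bigr]=b\cos((a\mp 2)u)/\cos u$, and the product-to-sum identity together with $\sin(2u)=2\sin u\cos u$ collapses the result to $x\sin(a\arcsin(bx))$ exactly as you describe. Note that the paper in fact states this theorem without any proof, treating it as a routine calculus identity to be used in deriving \eqref{mathematica-find-2}; your argument therefore supplies the omitted justification, and differentiating the proposed antiderivative is precisely the expected way to do so.
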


Now we are in a position to consider formulas \eqref{mathematica-find-1} and \eqref{mathematica-find-2}.
\begin{itemize}
\item[(i)] As to   \eqref{mathematica-find-1}, let $t=2/3$ and $x=6\sqrt{3x}$ in \eqref{stanley-series-1}, we obtain
    \begin{equation}\label{sn}
    \begin{split}
    \sin(\frac{2}{3} \arcsin(6\sqrt{3x}))&=\sum_{n\geq 0}\frac{2}{3}
    \left(\prod_{i=0}^{n-1}((2i+1)^2 -\frac{4}{9})\right)
    \frac{{(6\sqrt{3x})}^{2n+1}}{(2n+1)!}\\
    &=4\sqrt{3x}\sum_{n\geq 0}\frac{12^n}{(2n+1)!}
    \left(\prod_{i=0}^{n-1}(6i+1)(6i+5)\right)x^n\\
    &=8\sqrt{3x}\sum_{n\geq 0}S_nx^n,
    \end{split}
    \end{equation}
    which
    is nothing but \eqref{mathematica-find-1}.
    \item[(ii)]For   \eqref{mathematica-find-2},
    let
    \begin{equation}\label{ff}
    f(x)=\sum_{n=0}^{\infty}\frac{S_n}{2n+3}
    x^{2n+3}.
    \end{equation}
 Thanks to  (i), we get
    $$f'(x)=\sum_{n=0}^\infty S_nx^{2n+2}=\frac{x\sin(\frac{2}{3}\arcsin(6x\sqrt{3}))}{8\sqrt{3}}.$$
  From Theorem \ref{in-int}, it follows that
    \begin{align*}
    f(x)&=\frac{1}{8\sqrt{3}}\int_{0}^xt\sin(\frac{2}{3}\arcsin(6t\sqrt{3}))dt\\
    &=\frac{\frac{3}{4} \sin \left(\frac{4}{3} \sin ^{-1}\left(6 \sqrt{3} x\right)\right)-\frac{3}{8} \sin \left(\frac{8}{3} \sin ^{-1}\left(6 \sqrt{3} x\right)\right)}{3456 \sqrt{3}},
    \end{align*}
    which indicates
    \begin{equation}\label{value}
    f(1/6\sqrt{3})=\frac{1}{6144}.
    \end{equation}
    Therefore, we can deduce identity \eqref{mathematica-find-2} by invoking \eqref{ff} and \eqref{value} .

\end{itemize}
In addition, combining formulas \eqref{arcsin+arccos} and
 \eqref{cos(a+b)}, it is not difficult to obtain the following
result,
\begin{prop}\label{prop-1}
For all $t\in \mathbb{R}$ and $x\in [-1,1]$, we have
\begin{equation*}
\sin(\frac{\pi t}{2})\sin(t \arcsin(x))+\cos(\frac{\pi t}{2})\cos(t \arcsin(x))=\cos(t \arccos(x)).
\end{equation*}
\end{prop}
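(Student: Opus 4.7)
The plan is to recognize the left-hand side as the expansion of a cosine of a difference, and then to apply the complementary-angle identity to rewrite that difference as $t\arccos(x)$.

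First, I would identify the two quantities $\alpha=\pi t/2$ and $\beta=t\arcsin(x)$. Then the left-hand side is exactly $\cos\alpha\cos\beta+\sin\alpha\sin\beta$, so by the formula \eqref{cos(a+b)} (with the roles of $\alpha$ and $\beta$ as displayed there) we obtain
$$\sin\!\Big(\tfrac{\pi t}{2}\Big)\sin(t\arcsin(x))+\cos\!\Big(\tfrac{\pi t}{2}\Big)\cos(t\arcsin(x))=\cos\!\Big(\tfrac{\pi t}{2}-t\arcsin(x)\Big).$$
Factoring $t$ out of the argument on the right gives $\cos\!\big(t(\tfrac{\pi}{2}-\arcsin(x))\big)$.

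Next I would invoke the complementary identity \eqref{arcsin+arccos}, valid on $[-1,1]$, which rewrites $\tfrac{\pi}{2}-\arcsin(x)$ as $\arccos(x)$. Substituting this into the cosine yields $\cos(t\arccos(x))$, which is exactly the right-hand side of the claimed identity. The only hypothesis needed is $x\in[-1,1]$ (so that both $\arcsin(x)$ and $\arccos(x)$ are defined and \eqref{arcsin+arccos} applies); no restriction on $t\in\mathbb{R}$ is required, since \eqref{cos(a+b)} holds for all real arguments.

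There is no real obstacle here: the proof is a direct two-line manipulation combining the cosine subtraction formula with the identity $\arcsin(x)+\arccos(x)=\pi/2$. The only thing to be careful about is matching the pairing in \eqref{cos(a+b)} correctly, so that the sign inside the cosine comes out as $\tfrac{\pi t}{2}-t\arcsin(x)$ and the factor $t$ can be pulled out cleanly before the complementary-angle substitution is applied.
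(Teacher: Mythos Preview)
Your proposal is correct and follows exactly the route the paper indicates: the paper simply states that the identity follows by ``combining formulas \eqref{arcsin+arccos} and \eqref{cos(a+b)},'' and your argument spells out precisely that combination with $\alpha=\pi t/2$ and $\beta=t\arcsin(x)$.
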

With above results, we are ready to prove Theorem \ref{theorem}.

\section{Proof of the Theorem \ref{theorem}}
\label{sec:Prove Conjecture}
We shall give two lemmas before giving the proof of Theorem \ref{theorem}.
The first lemma is Fermat's simple theorem \cite{fermat}.
\begin{lemm}\label{fermat-theorem}
If $a$ is any integer prime to $m$, and if $m$ is prime, then
$a^{m-1}\equiv1\,\pmod m.$
\end{lemm}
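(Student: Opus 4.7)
The plan is to use the classical permutation/cancellation proof, since it is self-contained and uses nothing more than the definition of primality. I would write $p$ in place of $m$ to match standard notation, but the argument is the same. The idea is to observe that multiplication by $a$ permutes the nonzero residues modulo $p$, then compare the product of the two lists.

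Concretely, first I would consider the $p-1$ integers $a, 2a, 3a, \ldots, (p-1)a$ and establish two claims about them modulo $p$:
\begin{itemize}
\item[(a)] None of them is divisible by $p$. For if $p \mid ka$ with $1 \le k \le p-1$, then since $\gcd(a,p) = 1$ (as $p$ is prime and does not divide $a$), we would need $p \mid k$, contradicting $1 \le k \le p-1$.
\item[(b)] They are pairwise incongruent modulo $p$. For if $ia \equiv ja \pmod{p}$ with $1 \le i < j \le p-1$, then $p \mid (j-i)a$, and again $\gcd(a,p)=1$ forces $p \mid (j-i)$, which is impossible since $0 < j-i < p$.
\end{itemize}

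Together, (a) and (b) show that the residues of $a, 2a, \ldots, (p-1)a$ modulo $p$ form a permutation of the residues $1, 2, \ldots, p-1$. Taking the product of both lists modulo $p$ therefore gives
\[
a \cdot 2a \cdots (p-1)a \;\equiv\; 1 \cdot 2 \cdots (p-1) \pmod{p},
\]
that is, $a^{p-1}(p-1)! \equiv (p-1)! \pmod{p}$. The only step that requires care is the cancellation of $(p-1)!$: since $p$ is prime and each factor $1, 2, \ldots, p-1$ is strictly less than $p$, none of them is divisible by $p$, so $\gcd((p-1)!, p) = 1$ and $(p-1)!$ is invertible modulo $p$. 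Multiplying both sides by that inverse yields $a^{p-1} \equiv 1 \pmod{p}$, as required.

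There is no real obstacle here beyond being careful about invoking primality at the two points it is used (ruling out $p \mid k$ in (a)/(b), and guaranteeing $(p-1)!$ is a unit mod $p$ for the final cancellation). If a more arithmetic-flavored proof were preferred, one could instead induct on $a \ge 0$ to show $a^p \equiv a \pmod{p}$, using the binomial theorem together with $p \mid \binom{p}{k}$ for $1 \le k \le p-1$, and then divide by $a$ in the coprime case; but the permutation argument above is the shortest and will fit naturally before the second lemma.
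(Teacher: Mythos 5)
Your proof is correct and complete: the permutation argument (multiplication by $a$ permutes the nonzero residues mod $p$, compare products, cancel $(p-1)!$) is the standard self-contained proof of Fermat's little theorem, and you invoke primality at exactly the two places where it is needed. Note, however, that the paper does not prove this lemma at all --- it simply quotes it as ``Fermat's simple theorem'' with a citation to the literature --- so your write-up supplies a proof where the authors chose to give none; there is nothing to reconcile with the paper's (nonexistent) argument.
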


\begin{lemm}\label{lem-modp}
For any prime $p$, we have
\begin{equation*}
\frac{1}{p}{\binom{3p-2}{p-1}}\equiv -2\, \pmod p.
\end{equation*}
\end{lemm}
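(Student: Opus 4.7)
The plan is to expand the binomial coefficient as a product over consecutive integers, isolate the unique factor of $p$ that appears in the numerator, and then reduce everything modulo $p$. Writing
$$\binom{3p-2}{p-1}=\frac{(3p-2)!}{(p-1)!(2p-1)!}=\frac{(2p)(2p+1)(2p+2)\cdots(3p-2)}{(p-1)!},$$
I observe that the numerator is a product of $p-1$ consecutive integers running from $2p$ to $3p-2$, none of which is a multiple of $p$ except the first term $2p$. So $p$ divides $\binom{3p-2}{p-1}$ exactly once (up to the possible contribution from the denominator, which has none), and I can pull that factor out cleanly.

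Next, I would write
$$\frac{1}{p}\binom{3p-2}{p-1}=\frac{2}{(p-1)!}\prod_{k=1}^{p-2}(2p+k),$$
after factoring $2p$ out of the numerator. Since each $2p+k\equiv k\pmod{p}$ and none of these factors vanishes mod $p$, the product is congruent to $(p-2)!\pmod p$. Cancelling $(p-2)!$ against the $(p-2)!$ inside $(p-1)!=(p-1)(p-2)!$ gives
$$\frac{1}{p}\binom{3p-2}{p-1}\equiv\frac{2(p-2)!}{(p-1)!}=\frac{2}{p-1}\equiv -2\pmod p,$$
which is what we want.

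I expect no real obstacle here; the whole argument is one careful count of factors followed by a routine reduction. The only points requiring a moment of care are verifying that $2p$ is indeed the sole multiple of $p$ in the range $\{2p,\dots,3p-2\}$ (true because $3p$ is excluded), checking the edge case $p=2$ (where the product over $k=1,\dots,p-2$ is empty and the identity reduces to $\frac12\binom{4}{1}=2\equiv -2\pmod 2$), and noting that the final step uses only $p-1\equiv -1\pmod p$, so no appeal to Wilson's or Fermat's theorem is actually needed—even though Fermat's theorem has just been quoted as Lemma~\ref{fermat-theorem} for use in the main proof to follow.
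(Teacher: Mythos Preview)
Your argument is correct and follows essentially the same route as the paper: both expand $\frac{1}{p}\binom{3p-2}{p-1}$ as $2$ times a product of $p-2$ factors (the paper writes them as $\frac{3p}{j+1}-1$, you as $\frac{2p+k}{\,\cdot\,}$ after separating numerator and denominator) and then reduce each factor modulo $p$. Your organization, which keeps the factors integral and cancels $(p-2)!$ against $(p-1)!$ at the end, is arguably a bit cleaner than the paper's use of the rational expressions $\frac{3p}{j+1}-1$, but the substance is identical.
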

\begin{proof}
By applying
\begin{align*}
\frac{1}{p}{\binom{3p-2}{p-1}}
&=\frac{(3p-2)!}{p!(2p-1)!}\\
&=2\prod_{j=1}^{p-2}
\left(\frac{3p}{j+1}-1\right)
\end{align*}
and
\begin{align*}
\prod_{j=1}^{p-2}
\left(\frac{3p}{j+1}-1\right)&\equiv (-1)^{p-2}\\
&=-1\,\pmod p,
\end{align*}
the conclusion can be derived at once.

Now we can prove   Theorem \ref{theorem}.

\emph{Proof of Theorem \ref{theorem}.}  Firstly, we need to point out that \eqref{eq-conjecture-1} can be rewritten as follows:
\begin{equation*}
\cos(\frac{2}{3}\arccos(6x\sqrt{3}))=\sin(\frac{\pi}{3})\sin(\frac{2}{3}\arcsin(6x\sqrt{3}))
+\cos(\frac{\pi}{3})(1-24\sum_{k=1}^{\infty}T_kx^{2k}).
\end{equation*}
By Proposition \ref{prop-1}, for $k=1,2,3,\ldots$, if we can find $T_k$ such that  \begin{equation*}
1-24\sum_{k=1}^{\infty}T_kx^{2k}=\cos(\frac{2}{3}\arcsin(6x\sqrt{3})),
\end{equation*}
 then
we can prove  the first part of  Conjecture \ref{conjecture-1}.

By \eqref{stanley-series-2}, we see that
\begin{equation*}
\begin{split}
\cos(\frac{2}{3}\arcsin(6x\sqrt{3}))&=-\sum_{n\geq 0}\frac{4}{9^n}\left(\prod_{i=1}^{n-1}(6i+2)(6i-2)\right)
\frac{(6x\sqrt{3})^{2n}}{(2n)!}\\
&=-\sum_{n\geq 0}\frac{16^n}{3n-1}{\binom{3n}{n}}x^{2n}\\
&=1-\sum_{n\geq 1}\frac{16^n}{3n-1}{\binom{3n}{n}}x^{2n}.
\end{split}
\end{equation*}
For integers $n\geq 1$, if we set $T_n=\frac{16^n}{24(3n-1)}{\binom{3n}{n}}$,
 thus obtaining the desired sequence $(T_n)_{n\geq 1}$ for Conjecture \ref{conjecture-1}.

It is clear that
 $$\frac{16^n}{24(3n-1)}{\binom{3n}{n}}
 =16^{n-1}\left(2{\binom{3n-2}{n-1}}
 -{\binom{3n-2}{n}}\right)$$
 is an integer. One can refer to \cite{sun2} for details. In view of  Lemmas \ref{fermat-theorem} and \ref{lem-modp}, we can
get $T_p\equiv-2 \,\pmod p$ for any prime $p$. This completes the
proof of Theorem \ref{theorem}.

\end{proof}

%

\noindent{\bf Acknowledgements.} I would like to thank the referee for valuable comments and suggestions.
This work was supported by NSFC (No. 11171283).

\end{CJK*}
\end{document}